\documentclass[12pt]{amsart}
\hoffset=-1in
\voffset=-.5in
\textwidth=7in
\textheight=9.2in

\usepackage{amsmath,amscd,amssymb,euscript}
\usepackage{latexsym}
\usepackage{graphicx}
\usepackage{tikz}
\usetikzlibrary{backgrounds}
\usetikzlibrary{decorations.fractals}
\usetikzlibrary{calc,intersections,through,backgrounds}
\usepackage{mathrsfs}
\usepackage{epstopdf}
\usepackage{rotating}
\usepackage{lscape}
\DeclareGraphicsRule{.tif}{png}{.png}{\xi`convert #1 `basename #1
.tif`.png}

\input xy
\xyoption{all}
\linespread{1.2}

\newcommand{\bC}{{\mathbb C}}

\newcommand{\bP}{{\mathbb P}}

\newcommand{\cJ}{{\mathcal J}}

\newcommand{\cO}{{\mathcal O}}

\newcommand{\cW}{{\mathcal W}}

\newcommand{\ra}{\rightarrow}
\newcommand{\lra}{\longrightarrow}

\theoremstyle{definition}

\newtheorem{proposition}{Proposition}[section]
\newtheorem{lemma}[proposition]{Lemma}

\newtheorem{theorem}[proposition]{Theorem}
\newtheorem{problem}[proposition]{Question}

\newtheorem{definition}[proposition]{Definition}
\newtheorem{corollary}[proposition]{Corollary}

\newtheorem{remark}[proposition]{Remark}

\numberwithin{equation}{section}

 \setcounter{tocdepth}{1}

\begin{document}

\title{A remark on the generic vanishing of Koszul cohomology }

\author{Jie Wang}


\email{jiewang884@gmail.com}

\subjclass[2010]{}

\begin{abstract}We give a sufficient condition to study the vanishing of certain Koszul cohomology groups for general pairs $(X,L)\in W^r_{g,d}$ by induction. As an application, we show that to prove the Maximal Rank Conjecture (for quadrics), it suffices to check all cases with the Brill-Noether number $\rho=0$.

\end{abstract}

\maketitle

\tableofcontents

\section*{Introduction}
Let $L$ be a base point free $g^r_d$ on a smooth curve $X$, the Koszul cohomology group $K_{p,q}(X,L)$ is the cohomology of the Koszul complex at $(p,q)$-spot
$$\xymatrix{\ar[r]&\wedge^{p+1}H^0(L)\otimes H^0(L^{q-1})\ar[r]^-{d_{p+1,q-1}}&\wedge^pH^0(L)\otimes H^0(L^q)\ar[r]^-{d_{p,q}}&\wedge^{p-1}H^0(L)\otimes H^0(L^{q+1})\ar[r]&}$$
where
$$d_{p,q}(v_1\wedge...\wedge v_p\otimes \sigma)=\sum_{i}(-1)^iv_1\wedge...\wedge\widehat v_i\wedge..\wedge v_p\otimes v_i\sigma.$$ 

Koszul cohomology groups $K_{p,q}(X,L)$ completely determine the shape of a minimal free resolution of the section ring
$$R=R(X,L)=\bigoplus_{k\ge0}H^0(X,L^k).$$
and therefore carry enormous amount of information of the extrinsic geometry of $X$.

In this paper, we are interested in Green's question \cite{Gr}.

\begin{problem}\label{green}
 What do the $K_{p,q}(X,L)$ look like for $(X,L)$ general in $\cW^r_{g,d}$ (i.e. $X$ is a general curve of genus $g$ and $L$ is a general $g^r_d$ on $X$)? 
\end{problem}
The following facts are well known (c.f. \cite{Gr}, \cite{wang14}) for general $(X,L)\in\cW^r_{g,d}$.
\begin{enumerate}
\item We have the following picture of $k_{p,q}=\dim_{\bC}K_{p,q}(X,L)$ (The numbers $k_{p,q}$ in the table are undetermined.):

\begin{table}[h]
    \caption{}
    \begin{tabular}{ccccccccc}
    $0$&$h^1(L)$&$0$&...&$...$&$...$&$0$&$0$&$0$\\
    $0$&$\rho$&$k_{r-2,2}$&...&...&...&$k_{2,2}$&$k_{1,2}$&$k_{0,2}$\\
    $0$&$k_{r-1,1}$&$k_{r-2,1}$&...&...&...&$k_{2,1}$&$k_{1,1}$&$0$\\
    $0$&$0$&$0$&...&...&...&$0$&$0$&$1$\\
        \end{tabular}
    \end{table}
    
    \item \begin{eqnarray}&&k_{p,1}-k_{p-1,2}\nonumber=\chi(\text{Koszul complex})\nonumber\\ \nonumber\\&=&{r+1\choose p}(g-d+r)-{r+1\choose p+1}g+{r-1\choose p}d+{r\choose p+1}(g-1).\nonumber
\end{eqnarray}
    \end{enumerate}

\vspace{.5cm}

Question \ref{green} seems to be too difficult to answer in its full generality.
For the case $p=1$, the Maximal Rank Conjecture (MRC)\footnote{In this paper, we will restrict ourselves to only consider quadrics containing $X$.} \cite{EH2} predicts that the multiplication map 
\begin{eqnarray}\label{multimap}Sym^2H^0(X,L)\stackrel{\mu}\longrightarrow H^0(X,L^2)\nonumber
\end{eqnarray}
 is either injective or surjective, or equivalently
\begin{eqnarray}\label{MRC}\min\{k_{1,1},k_{0,2}\}=0.\nonumber
\end{eqnarray}

 

Geometrically, this means that the number of quadrics in $\mathbb{P}^r:=\bP(H^0(L))$ containing $X$ is as simple as the Hilbert function of $X\subset\bP^r$ allows.

 There are many partial results about the MRC using the so-called \textquotedblleft m$\acute{e}$thode d'Horace"
originally proposed by Hirschowitz. We refer to, for instance, \cite{BF1}, \cite{BF2} for some recent results in this direction.

For higher syzygies, again there are many results (c.f. \cite{A1}, \cite{A2}, \cite{B}, \cite{Ein}, and \cite{F}). One breakthrough result is Voisin's solution to the generic Green's conjecture \cite{V1} \cite{V2}, which answers Question \ref{green} for the case $L=K_X$.

\begin{definition} For $1\le p\le r-1$, we say property ${\bf GV}(p)^r_{g,d}$ holds if for general $(X,L)\in\cW^r_{g,d}$,
\[
\min\{k_{p,1}(X,L), k_{p-1,2}(X,L)\}=0.
\]
\end{definition}
\begin{remark}
 The MRC implies that property ${\bf GV}(1)^r_{g,d}$ always holds provided the Brill-Noether number $\rho:=g-(r+1)(g-d+r)\ge0$. However, property  ${\bf GV}(p)^r_{g,d}$ does {\bf not} always hold for $p\ge2$ (c.f Green \cite{Gr} (4.a.2) for more details). 
\end{remark}

In this note, we give a sufficient condition (Theorem \ref{thminduction}) for ${\bf GV}(p)^r_{g,d}$ to imply ${\bf{GV}}(p)^r_{g+1,d+1}$. One could use this to set up an inductive argument for the generic vanishing of Koszul cohomology groups. In each step of the induction, $r$ is fixed and $g$, $d$ go up by $1$.

 In the case $p=1$, this sufficient condition turns out to be an surprisingly simple geometric condition on the quadrics containing the first secant variety $\Sigma_1(X)$ of $X$ (Lemma \ref{lemquadric}). We manage to verify this geometric condition and prove

\begin{theorem}\label{thmmain}The property ${\bf GV}(1)^r_{g,d}$ implies ${\bf GV}(1)^r_{g+1,d+1}$.
\end{theorem}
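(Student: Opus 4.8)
The plan is to reduce everything, via Theorem~\ref{thminduction}, to an elementary statement about quadrics and secant lines. Applying Theorem~\ref{thminduction} with $p=1$, in order to deduce ${\bf GV}(1)^r_{g+1,d+1}$ from ${\bf GV}(1)^r_{g,d}$ it suffices to verify the sufficient hypothesis supplied there for a general pair $(X,L)\in\cW^r_{g,d}$, where $X$ is placed in $\bP^r=\bP(H^0(X,L)^\vee)$ in the usual way. By Lemma~\ref{lemquadric}, in the case $p=1$ this hypothesis is the purely projective condition that the general quadric through $X$ does not contain the secant variety $\Sigma_1(X)$; I would verify it in the strong form $H^0\big(\bP^r,\cI_{\Sigma_1(X)}(2)\big)=0$. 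If on the other hand $H^0\big(\bP^r,\cI_X(2)\big)=0$, i.e. $k_{1,1}(X,L)=0$, then there is nothing to check: this vanishing is inherited by the reducible limit curve appearing in Theorem~\ref{thminduction}, and hence, by upper semicontinuity, $k_{1,1}=0$ for the general member of $\cW^r_{g+1,d+1}$ as well.

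So it remains to prove the vanishing $H^0\big(\bP^r,\cI_{\Sigma_1(X)}(2)\big)=0$, and I would do this for an arbitrary nondegenerate curve $X\subset\bP^r$. Let $Q$ be a quadric with $\Sigma_1(X)\subset Q$, defined by the symmetric bilinear form $B$ on $V:=H^0(X,L)^\vee$. Given distinct $p,q\in X$ with affine lifts $\hat p,\hat q\in V$, the secant line $\overline{pq}=\{[s\hat p+t\hat q]\}$ lies in $\Sigma_1(X)\subset Q$, so the binary quadratic form $s^2B(\hat p,\hat p)+2st\,B(\hat p,\hat q)+t^2B(\hat q,\hat q)$ vanishes identically; in particular $B(\hat p,\hat q)=0$ for all $p,q\in X$, the diagonal values $B(\hat p,\hat p)$ being zero already because $X\subset\Sigma_1(X)\subset Q$. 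As $X$ is nondegenerate, the affine cone $\widehat X\subset V$ over $X$ spans $V$, so $B$ vanishes on a spanning set, whence $B\equiv 0$ and $Q=0$. Therefore $H^0\big(\bP^r,\cI_{\Sigma_1(X)}(2)\big)=0$, which is exactly the condition demanded of the general $(X,L)\in\cW^r_{g,d}$, and the theorem follows.

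Essentially all of the difficulty is absorbed into Theorem~\ref{thminduction} (the comparison of the Koszul cohomology of the reducible limit curve with that of its components) and the reformulation in Lemma~\ref{lemquadric}; granted those, the $p=1$ case collapses to the two lines above, and I anticipate no genuine obstacle there. The points that do require a little care are bookkeeping: matching the hypothesis of Theorem~\ref{thminduction}, as put in geometric form by Lemma~\ref{lemquadric} (a statement that the general secant line imposes an independent condition on the quadrics through $X$), with the uniform vanishing $H^0\big(\bP^r,\cI_{\Sigma_1(X)}(2)\big)=0$ — for which one uses the irreducibility of $\Sigma_1(X)$, so that the locus of secant lines on which every quadric through $X$ vanishes is a proper subvariety of $\operatorname{Sym}^2 X$ — and isolating the degenerate case $H^0\big(\bP^r,\cI_X(2)\big)=0$. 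When $r\le 3$ the vanishing is immediate, since then already $\Sigma_1(X)=\bP^r$.
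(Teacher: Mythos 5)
Your proposal is correct and follows essentially the same route as the paper: apply Theorem \ref{thminduction} with $p=1$ together with Lemma \ref{lemquadric}, split according to whether $k_{1,1}(X,L)=0$, and reduce everything to the statement that no quadric hypersurface contains $\Sigma_1(X)$ for a nondegenerate curve $X\subset\bP^r$, which is exactly the paper's Lemma \ref{lemsecant}. The only deviation is your proof of that last vanishing, via restricting the symmetric bilinear form to all secant lines and using that the cone over $X$ spans $H^0(L)^\vee$, whereas the paper argues that such a quadric would be singular along $X$, contradicting that the singular locus of a quadric is a linear subspace; both arguments are valid.
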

Based on our knowledge about the base cases of the induction, we have
\begin{theorem}\label{thmapp}The Maximal Rank Conjecture holds for a general pair $(X,L)\in \cW^r_{g,d}$, if $h^1(L)\le2$.
\end{theorem}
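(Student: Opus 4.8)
The plan is to prove property ${\bf GV}(1)^r_{g,d}$ --- which by definition is the assertion that the MRC holds for a general pair in $\cW^r_{g,d}$ --- for all triples with $h^1(L)\le 2$, by induction on $g$, the inductive step being supplied by Theorem \ref{thmmain}. (The case $r=1$ is separate and trivial: there $\Symm^2H^0(L)\to H^0(L^2)$ is the pullback of the isomorphism $\Symm^2H^0(\cO_{\bP^1}(1))\xrightarrow{\sim}H^0(\cO_{\bP^1}(2))$ along the morphism defined by $|L|$, hence always injective; so assume $r\ge 2$.) The starting observation is that, by Riemann--Roch, $h^1(L)=g-d+r$; hence the value $s:=h^1(L)$ is unchanged under $(g,d)\mapsto(g+1,d+1)$, while $\rho=g-(r+1)s$ increases by $1$. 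Thus, for fixed $r\ge 2$ and fixed $s\in\{0,1,2\}$, every pair $(g,d)$ with $h^1(L)=s$ in the range $\rho\ge 0$ --- outside of which $\cW^r_{g,d}$ is empty for a general curve and the claim is vacuous --- is obtained from the unique pair $(g_s,d_s)=\bigl((r+1)s,\,(r+1)s+r-s\bigr)$, for which $\rho=0$, by finitely many steps $(g,d)\mapsto(g+1,d+1)$. By Theorem \ref{thmmain} it therefore suffices to verify the MRC in the three base cases $s=0,1,2$, for all $r\ge 2$.

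I would treat these base cases as follows. For $s=0$ one has $(g_0,d_0)=(0,r)$, so $(X,L)=(\bP^1,\cO_{\bP^1}(r))$ is a rational normal curve, $\Symm^2H^0(L)\to H^0(L^2)$ is the standard surjection onto $H^0(\cO_{\bP^1}(2r))$, and $k_{0,2}=0$; equivalently, this instance is covered by the classical maximal-rank statement for general non-special series. For $s=1$ one has $(g_1,d_1)=(r+1,2r)$ with $d_1=2g_1-2$ and $h^0(L)=g_1$, which forces $L=K_X$; the maximality of the multiplication map on a general canonical curve is then classical (Petri's theorem; for $g_1=3,4$ a dimension count suffices). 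Applying Theorem \ref{thmmain} repeatedly propagates ${\bf GV}(1)$ to every pair with $s=0$ and every pair with $s=1$.

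The crucial case is $s=2$, where $(g_2,d_2)=(2r+2,3r)$ and the residual bundle $M:=K_X-L$ is a pencil of degree $r+2$ with $\rho(M)=0$ --- a minimal (gonality) pencil on a general curve of even genus $2r+2$ --- so that $L=K_X-M$. I expect this to be the main obstacle, since it does not follow from the classical results. My approach would be a specialization in the spirit of the m\'ethode d'Horace of \cite{BF1}, \cite{BF2}: degenerate $(X,L)$ --- for instance by degenerating the pencil $M$ to a pencil on a reducible or nodal curve --- to a curve on which the rank of $\Symm^2H^0(L)\to H^0(L^2)$ can be pinned down by a dimension count together with appropriate vanishings on the components, and then use semicontinuity of $k_{1,1}$ and $k_{0,2}$ to conclude that $\min\{k_{1,1},k_{0,2}\}=0$ for the general member, i.e. that ${\bf GV}(1)^r_{g_2,d_2}$ holds. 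Combining the three base cases with the inductive step of Theorem \ref{thmmain} then yields the MRC for a general $(X,L)\in\cW^r_{g,d}$ whenever $h^1(L)\le 2$.
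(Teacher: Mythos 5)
Your reduction is exactly the paper's: use Theorem \ref{thmmain} to induct with $r$ and $h^1=g-d+r$ fixed while $g,d$ each go up by $1$, so that everything comes down to the $\rho=0$ base cases $(g_s,d_s)=((r+1)s,(r+1)s+r-s)$ for $s=h^1\in\{0,1,2\}$, and your treatment of $s=0$ (rational normal curves) and $s=1$ (canonical curves, Noether/Petri) matches the paper's.

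The genuine gap is the base case $s=2$, i.e. $(g,d)=(2r+2,3r)$, which you yourself identify as the crucial case but do not actually prove: you only sketch an intention to degenerate ``in the spirit of the m\'ethode d'Horace'' and invoke semicontinuity, without specifying the degeneration, the dimension count, or the vanishings on the components. That is precisely where the paper's argument has real content: it quotes the prior result of \cite{W1} that a general pair with $g=2r+2$, $d=3r$ is projectively normal for $r\ge4$ (so in particular $\mu$ is surjective), with the cases $r=2,3$ checked directly --- a full paper's worth of work, not a routine verification. Note also that upper semicontinuity of $k_{1,1}$ and $k_{0,2}$ only helps once you have produced a concrete (possibly reducible or nodal) curve on which the multiplication map is shown to have maximal rank and on which the relevant cohomology computes the right thing; none of that is supplied. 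So as written the proposal establishes the inductive scaffolding but leaves the hardest base case unproven; to complete it you should either cite the projective normality result for extremal-degree line bundles or carry out the degeneration argument in detail.
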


An interesting question remaining is that for $p\ge2$, whether the sufficient condition in Theorem \ref{thminduction} has anything to do with higher syzygies of $\Sigma_1(X)$.

\section{Koszul cohomology on a singular curve}
Throughout this section, let $X=Y\cup Z$ be the union of a smooth curve $Y$ of genus $g$ and $Z=\bP^1$ meeting at two general points $u$ and $v$. Consider a line bundle $L$ (up to $\bC^*$-action) on $X$ such that $A:=L|_Y$ is a $g^r_d$ and $L|_Z=\cO_{\bP^1}(1)$. Note that by construction, every section in $H^0(Y,A)$ 
extends uniquely to a section
in $H^0(X,L)$. Thus we have an isomorphism induced by restriction to $Y$:
\begin{eqnarray}\label{eqnsectionL}
H^0(X,L)\cong H^0(Y,A).
\end{eqnarray}

\begin{proposition}\label{propKp1}Notation as above, if $K_{p,1}(Y,A)=0$, then $K_{p,1}(X,L)=0$.
\end{proposition}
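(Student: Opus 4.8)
The plan is to compare the Koszul complex of $(X,L)$ with that of $(Y,A)$ using the isomorphism $H^0(X,L)\cong H^0(Y,A)$ from \eqref{eqnsectionL}, together with a careful analysis of $H^0(X,L^q)$ for $q\ge 2$. First I would establish the basic cohomology of powers of $L$. Since $L|_Z=\cO_{\bP^1}(1)$, we have $L^q|_Z=\cO_{\bP^1}(q)$, and restricting to the two components gives a Mayer--Vietoris type sequence
\[
0\ra H^0(X,L^q)\ra H^0(Y,A^q)\oplus H^0(Z,\cO_{\bP^1}(q))\ra \bC_u\oplus \bC_v,
\]
where the last map records the difference of values at the two nodes. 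A section of $L^q$ on $X$ is a pair of sections agreeing at $u$ and $v$. The key structural point is that a section of $\cO_{\bP^1}(q)$ on $Z=\bP^1$ vanishing at the two points $u,v$ is determined up to the $(q-1)$-dimensional space of such sections, and any prescribed pair of values at $u,v$ can be matched; hence $H^0(X,L^q)$ fits into a short exact sequence relating it to $H^0(Y,A^q)$ with a correction term coming from $Z$. More precisely, restriction to $Y$ gives a surjection $H^0(X,L^q)\surj H^0(Y,A^q)$ (every section on $Y$ extends, since we can always choose a matching section on $\bP^1$ as $q\ge 1$) whose kernel is the space of sections supported on $Z$ vanishing at $u,v$, i.e.\ $H^0(\bP^1,\cO(q-2))$ of dimension $q-1$.

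Next I would write down the map of Koszul complexes. Using \eqref{eqnsectionL} to identify $\wedge^p H^0(X,L)\cong \wedge^p H^0(Y,A)$, the restriction maps $H^0(X,L^q)\ra H^0(Y,A^q)$ are compatible with the Koszul differentials $d_{p,q}$, giving a morphism of complexes from the Koszul complex of $(X,L)$ to that of $(Y,A)$. I would like to run the relevant row ($q=1$) of these complexes. In degree $q=1$ the restriction map $H^0(X,L)\ra H^0(Y,A)$ is an \emph{isomorphism} and $H^0(X,L^2)\ra H^0(Y,A^2)$ is \emph{surjective} with kernel of dimension $1$ (the case $q=2$, so $q-1=1$). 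Thus at the $(p,1)$-spot the complex for $X$ maps to the complex for $Y$ with an isomorphism on $\wedge^p H^0(L)\otimes H^0(L^1)$ and on the incoming term $\wedge^{p+1}H^0(L)\otimes H^0(\cO_X)$ (note $H^0(\cO_X)=\bC=H^0(\cO_Y)$ since $X$ is connected). The only discrepancy is in the \emph{outgoing} term $\wedge^{p-1}H^0(L)\otimes H^0(L^2)$.

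The heart of the argument is then a diagram chase. Given $K_{p,1}(Y,A)=0$, I want to show a class in $K_{p,1}(X,L)$ — represented by $\xi\in \ker d_{p,1}^X$ — is a boundary. Its image $\bar\xi$ in the $Y$-complex lies in $\ker d_{p,1}^Y$, hence equals $d_{p+1,0}^Y(\eta)$ for some $\eta\in\wedge^{p+1}H^0(A)\otimes H^0(\cO_Y)$. Pulling $\eta$ back via the isomorphisms to $\eta'$ on the $X$-side, the difference $\xi - d_{p+1,0}^X(\eta')$ maps to zero in the $Y$-complex at the $(p,1)$-spot; since $H^0(X,L)\to H^0(Y,A)$ is an isomorphism, this forces $\xi - d_{p+1,0}^X(\eta') = 0$ already (the $(p,1)$-term maps isomorphically), so $\xi$ is a boundary and $K_{p,1}(X,L)=0$. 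The main obstacle — and the step requiring genuine care rather than formal nonsense — is verifying that the restriction map is compatible with all the identifications and that $H^0(\cO_X)\to H^0(\cO_Y)$ is an isomorphism (connectedness of $X$), together with confirming that the discrepancy really does sit only in the outgoing $H^0(L^2)$ term so that the kernel of $d_{p,1}$ is genuinely unaffected. One must also make sure there are no subtleties from $L$ not being a line bundle in the usual sense on the nodal curve (it is, by construction), and that $p\le r-1$ so the terms in question are as described. I expect the cohomology computation for $H^0(L^q)$ and the connectedness observation to be the only places where something could go wrong; the diagram chase itself is then routine.
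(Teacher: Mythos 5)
Your argument is essentially the paper's proof: compare the Koszul complexes of $(X,L)$ and $(Y,A)$ at the $(p,1)$-spot via restriction to $Y$, note that the vertical maps on $\wedge^{p+1}H^0(L)$ and $\wedge^{p}H^0(L)\otimes H^0(L)$ are isomorphisms by \eqref{eqnsectionL}, and run the same diagram chase, which never needs any property of the third vertical arrow $H^0(X,L^2)\to H^0(Y,A^2)$. Your preliminary Mayer--Vietoris analysis of $H^0(X,L^q)$ is correct but superfluous for this statement; otherwise the proof matches the paper's.
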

\begin{proof}
Consider the following commutative diagram
$$\xymatrix{\bigwedge^{p+1}H^0(L)\ar[r]\ar[d]^-{\cong}&\bigwedge^{p}H^0(L)\otimes H^0(L)\ar[r]\ar[d]^-{\cong}&\bigwedge^{p-1}H^0(L)\otimes H^0(L^2)\ar[d]\\
\bigwedge^{p+1}H^0(A)\ar[r]&\bigwedge^{p}H^0(A)\otimes H^0(A)\ar[r]&\bigwedge^{p-1}H^0(A)\otimes H^0(A^2)}
$$
where the vertical arrows are restriction maps to $Y$. The hypothesis says that the second row is exact in the middle, a simple diagram chasing gives the conclusion.
\end{proof}

\begin{remark}The argument in Proposition \ref{propKp1} does not generalize to the case $q=2$ because $H^0(Y,A^2)$ is not isomorphic to $H^0(X,L^2)$. 
\end{remark}

To study the relation between $K_{p-1,2}(X,L)$ and $K_{p-1,2}(Y,A)$, we use the duality relation \cite[p. 21]{AN}
$$K_{p-1,2}(Y,A)^{\lor}\cong K_{r-p,0}(Y,A;K_{Y})$$
and compare $K_{r-p,0}(Y,A;K_{Y})$ with $K_{r-p,0}(X,L;\omega_{X})$.
Here $\omega_{X}$ is the dualizing sheaf of $X$. Its restriction $\omega_{X}|_Y\cong K_Y(p+q)$ and $\omega_{X}|_Z\cong \cO_{\bP^1}$. One checks easily that restriction to $Y$ induces the following  isomorphisms:
\begin{eqnarray}&&H^0(X,\omega_{X})\cong H^0(Y,K_Y(u+v)),\\
&&H^0(X,\omega_{X}\otimes L^{-1})\cong H^0(Y, K_Y\otimes A^{-1})
,\\
&&\label{eqnsectionO}
H^0(X,\omega_{X}\otimes L)\cong H^0(Y, K_Y\otimes A(u+v)).
\end{eqnarray}

Denote $M_A$ the kernel bundle associated to a globally generated line bundle $A$, defined by the exact sequence
\[
0\ra M_A\lra H^0(Y,A)\otimes\cO_Y\stackrel{ev}\lra A\lra 0.
\]
Taking $(r-p)$-th wedge product, we obtain
\[
0\lra\wedge^{r-p}M_A\lra\wedge^{r-p}H^0(M)\otimes\cO_Y\lra\wedge^{r-p-1}M_A\otimes A\lra0.
\]

Tensoring the above sequence with $K_Y$, we obtain an isomorphism \cite[Section 2.1]{AN}
\[
H^0(\wedge^{r-p}M_A\otimes K_Y)\cong Ker(\delta_0:\wedge^{r-p}H^0(A)\otimes H^0( K_Y)\lra\wedge^{r-p-1}H^0(A)\otimes H^0(K_Y\otimes A)),
\]
and therefore,
\begin{eqnarray}
K_{r-p,0}(Y,A;K_{Y})\cong \frac{H^0(\wedge^{r-p}M_A\otimes K_Y)}{\wedge^{r-p+1}H^0(A)\otimes H^0(K_Y\otimes A^{-1})}.
\end{eqnarray}

\begin{proposition}\label{propsurff} We have an isomorphism

\[K_{r-p,0}(X,L;\omega_{X})\cong \frac{H^0(\wedge^{r-p}M_A\otimes K_Y(u+v))}{\wedge^{r-p+1}H^0(A)\otimes H^0(K_Y\otimes A^{-1})}.
\] 
\end{proposition}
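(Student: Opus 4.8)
The plan is to run, on $X$, the same chain of identifications that produced the displayed formula for $K_{r-p,0}(Y,A;K_{Y})$ above, with $K_Y$ replaced by $K_Y(u+v)$ wherever the middle or right-hand term of the relevant Koszul complex enters. The inputs that make this possible are the section isomorphism $H^0(X,L)\cong H^0(Y,A)$ together with the three restriction isomorphisms for $\omega_{X}$, $\omega_{X}\otimes L^{-1}$ and $\omega_{X}\otimes L$ established above; once the complex has been transported to $Y$, one invokes the kernel-bundle presentation of \cite[Section 2.1]{AN}.

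Concretely, $K_{r-p,0}(X,L;\omega_{X})$ is the middle cohomology of
\[
\wedge^{r-p+1}H^0(L)\otimes H^0(\omega_{X}\otimes L^{-1})\lra\wedge^{r-p}H^0(L)\otimes H^0(\omega_{X})\lra\wedge^{r-p-1}H^0(L)\otimes H^0(\omega_{X}\otimes L).
\]
Restriction to $Y$ is compatible with multiplication, so it carries this complex isomorphically onto
\[
\wedge^{r-p+1}H^0(A)\otimes H^0(K_Y\otimes A^{-1})\stackrel{\delta_{-1}}{\lra}\wedge^{r-p}H^0(A)\otimes H^0(K_Y(u+v))\stackrel{\delta_0}{\lra}\wedge^{r-p-1}H^0(A)\otimes H^0(K_Y\otimes A(u+v)),
\]
the three comparison maps being exactly $H^0(X,L)\cong H^0(Y,A)$ and the restriction isomorphisms for $\omega_{X}$, $\omega_{X}\otimes L^{-1}$, $\omega_{X}\otimes L$. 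One small point: under these identifications the leftmost multiplication $H^0(L)\otimes H^0(\omega_{X}\otimes L^{-1})\to H^0(\omega_{X})$ becomes $H^0(A)\otimes H^0(K_Y\otimes A^{-1})\to H^0(K_Y)$, whose image lies in the subspace $H^0(K_Y)\subseteq H^0(K_Y(u+v))=H^0(Y,\omega_{X}|_Y)$; this is consistent, and it is why the denominator below involves only $H^0(K_Y\otimes A^{-1})$, exactly as in the $Y$-formula. Thus $K_{r-p,0}(X,L;\omega_{X})\cong\ker\delta_0/\im\delta_{-1}$.

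To compute $\ker\delta_0$ I would tensor
\[
0\lra\wedge^{r-p}M_A\lra\wedge^{r-p}H^0(A)\otimes\cO_Y\lra\wedge^{r-p-1}M_A\otimes A\lra0
\]
by $K_Y(u+v)$ and take global sections, which presents $H^0(\wedge^{r-p}M_A\otimes K_Y(u+v))$ as the kernel of the induced map $\wedge^{r-p}H^0(A)\otimes H^0(K_Y(u+v))\to H^0(\wedge^{r-p-1}M_A\otimes A\otimes K_Y(u+v))$. Since $\wedge^{r-p-1}M_A\otimes A$ is a subbundle of $\wedge^{r-p-1}H^0(A)\otimes A$, the target of this map injects into $\wedge^{r-p-1}H^0(A)\otimes H^0(K_Y\otimes A(u+v))$, and the composite is precisely $\delta_0$; hence $\ker\delta_0=H^0(\wedge^{r-p}M_A\otimes K_Y(u+v))$. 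By the definition of the Koszul differential, $\im\delta_{-1}$ is the image of $\wedge^{r-p+1}H^0(A)\otimes H^0(K_Y\otimes A^{-1})$ inside this kernel, which (with the same abuse of notation already in force for the $Y$-formula) is the claimed denominator. Combining the last two displays yields the asserted isomorphism.

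The one genuinely substantive step — the one I would write out with care — is the claim that restriction to $Y$ identifies the Koszul complex on $X$ with the displayed complex on $Y$, i.e.\ that the section isomorphisms for $\omega_{X}$, $\omega_{X}\otimes L^{-1}$, $\omega_{X}\otimes L$ are compatible with multiplication by $H^0(L)\cong H^0(A)$, including the borderline multiplication whose image lands in $H^0(K_Y)\subset H^0(K_Y(u+v))$. Everything after that is a transcription of the $Y$-argument with $K_Y$ replaced throughout by $K_Y(u+v)$, and the kernel-bundle step is literally \cite[Section 2.1]{AN} applied to the line bundle $K_Y(u+v)$ in place of $K_Y$.
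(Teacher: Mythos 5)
Your proposal is correct and follows essentially the same route as the paper: transport the Koszul complex on $X$ to $Y$ via the restriction isomorphisms (\ref{eqnsectionL})--(\ref{eqnsectionO}), then identify $\ker\delta_0$ with $H^0(\wedge^{r-p}M_A\otimes K_Y(u+v))$ through the kernel-bundle presentation of \cite{AN}. Your extra remark about the borderline multiplication landing in $H^0(K_Y)\subset H^0(K_Y(u+v))$ is a sound clarification of a point the paper leaves implicit.
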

\begin{proof}Consider the following diagram
\[\small
\xymatrix{\wedge^{\small r-p+1}H^0(L)\otimes H^0(\omega_{X}\otimes L^{-1})\ar[r]^-{d_{-1}}\ar[d]^-{\cong}&\wedge^{r-p}H^0(L)\otimes H^0(\omega_{X})\ar[r]^-{d_0}\ar[d]^-{\cong}&\wedge^{\small r-p-1}H^0(L)\otimes H^0(\omega_{X}\otimes L)\ar[d]^-{\cong}\\
\wedge^{r-p+1}H^0(A)\otimes H^0(K_Y\otimes A^{-1})\ar[r]&\wedge^{r-p}H^0(A)\otimes H^0(K_Y(u+v))\ar[r]^-{\delta_0}&\wedge^{r-p-1}H^0(A)\otimes H^0(K_Y\otimes A(u+v)),}
\]
where the vertical arrows are induced by restriction to $Y$.
By definition, $K_{r-p,0}(X,L;\omega_{X})$ is the cohomology in the middle of the first row.
By Equations (\ref{eqnsectionL}) to (\ref{eqnsectionO}), all three vertical arrows are isomorphisms, thus 
\[
Ker(d_0)\cong Ker(\delta_0)\cong H^0(\wedge^{r-p}M_A\otimes K_Y(u+v)).
\]
and 
the statement follows immediately.
\end{proof}
\begin{corollary}Notation as above, if 
$$h^0(\wedge^{r-p}M_A\otimes K_Y)=h^0(\wedge^{r-p}M_A\otimes K_Y(u+v)),$$ 
or equivalently,
 \begin{eqnarray}\label{eqnRR}
h^0((\wedge^pM_A\otimes A(-u-v))= h^0(\wedge^pM_A\otimes A)-2{r\choose p},
 \end{eqnarray}
 then 
 $$K_{r-p,0}(X,L;\omega_{X})\cong K_{r-p,0}(Y,A;K_{Y}).$$
\end{corollary}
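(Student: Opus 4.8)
The plan is to read off from Proposition~\ref{propsurff} together with the displayed presentation of $K_{r-p,0}(Y,A;K_{Y})$ immediately preceding it that both Koszul groups are quotients of the \emph{same} ambient vector space by the \emph{same} subspace, so the asserted isomorphism reduces to a dimension count. Concretely, inside $\wedge^{r-p}H^0(A)\otimes H^0(K_Y(u+v))$ one has nested subspaces
\[
U\ :=\ \im\bigl(d_{-1}:\wedge^{r-p+1}H^0(A)\otimes H^0(K_Y\otimes A^{-1})\longrightarrow\wedge^{r-p}H^0(A)\otimes H^0(K_Y)\bigr)\ \subseteq\ V\ \subseteq\ W,
\]
where $V:=\ker\delta_0\cong H^0(\wedge^{r-p}M_A\otimes K_Y)$ for the untwisted $\delta_0$ and $W:=\ker\delta_0\cong H^0(\wedge^{r-p}M_A\otimes K_Y(u+v))$ for the $(u+v)$-twisted one, the inclusion $V\subseteq W$ being induced by the sheaf inclusion $\wedge^{r-p}M_A\otimes K_Y\hookrightarrow\wedge^{r-p}M_A\otimes K_Y(u+v)$. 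Then $K_{r-p,0}(Y,A;K_{Y})=V/U$ and, by Proposition~\ref{propsurff}, $K_{r-p,0}(X,L;\omega_{X})=W/U$.

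Granting this, the homological part is formal: the inclusion $V\hookrightarrow W$ induces a natural map $V/U\to W/U$ whose kernel is $(V\cap U)/U=U/U=0$ because $U\subseteq V$, so it is always injective, and which is surjective iff $W=V+U=V$, i.e. iff $V=W$. Since $V\hookrightarrow W$ is injective, $V=W$ holds iff $\dim V=\dim W$, that is iff $h^0(\wedge^{r-p}M_A\otimes K_Y)=h^0(\wedge^{r-p}M_A\otimes K_Y(u+v))$. Thus the first form of the hypothesis yields $K_{r-p,0}(X,L;\omega_{X})\cong K_{r-p,0}(Y,A;K_{Y})$.

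For the equivalence of the two numerical conditions I would use Serre duality on $Y$. From $0\to M_A\to H^0(A)\otimes\cO_Y\to A\to0$ one gets $\det M_A\cong A^{-1}$, whence $(\wedge^{r-p}M_A)^{\vee}\cong\wedge^{p}M_A\otimes(\det M_A)^{-1}\cong\wedge^{p}M_A\otimes A$, and therefore
\[
h^0(\wedge^{r-p}M_A\otimes K_Y(D))\ =\ h^1(\wedge^{p}M_A\otimes A(-D))\qquad\text{for }D=0\text{ and }D=u+v.
\]
Hence $h^0(\wedge^{r-p}M_A\otimes K_Y)=h^0(\wedge^{r-p}M_A\otimes K_Y(u+v))$ is equivalent to $h^1(\wedge^{p}M_A\otimes A)=h^1(\wedge^{p}M_A\otimes A(-u-v))$, and feeding this into the long exact cohomology sequence of $0\to\wedge^{p}M_A\otimes A(-u-v)\to\wedge^{p}M_A\otimes A\to(\wedge^{p}M_A\otimes A)|_{u+v}\to0$, whose skyscraper term has dimension $2{r\choose p}$, turns it precisely into $h^0(\wedge^{p}M_A\otimes A(-u-v))=h^0(\wedge^{p}M_A\otimes A)-2{r\choose p}$, i.e. into (\ref{eqnRR}).

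The one genuinely substantive point — and where I would be most careful — is the claim that the subspace $U$ is the same in both presentations: that the Koszul differential for $(X,L;\omega_{X})$ computing the denominator in Proposition~\ref{propsurff} agrees, under the vertical isomorphisms of the diagram there, with $\delta_{-1}$ for $(Y,A;K_Y)$ followed by the inclusion $H^0(K_Y)\hookrightarrow H^0(K_Y(u+v))$. This follows from commutativity of that large diagram together with the naturality, in the twisting divisor $D$, of the isomorphism $\ker\delta_0\cong H^0(\wedge^{r-p}M_A\otimes K_Y(D))$ coming from the $(r-p)$-th wedge of the defining sequence of $M_A$; once this is pinned down, the rest is routine.
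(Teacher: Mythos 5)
Your proposal is correct and fills in exactly the argument the paper leaves implicit: the paper's proof is just "Immediate" (both groups are quotients of the nested spaces $H^0(\wedge^{r-p}M_A\otimes K_Y)\subseteq H^0(\wedge^{r-p}M_A\otimes K_Y(u+v))$ by the same Koszul image, so equality of $h^0$'s gives the isomorphism), with the equivalence of the two numerical conditions coming from Riemann--Roch and $(\wedge^{r-p}M_A)^{\lor}\cong\wedge^pM_A\otimes A$, which is the same content as your Serre duality plus restriction-sequence computation. So this is essentially the paper's proof, written out in detail.
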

\begin{proof}Immediate. The equivalence of the two assumptions followed from Riemann-Roch and the fact that $\wedge^{r-p}M_A^\lor\cong \wedge^pM_A\otimes A$.
\end{proof}

By degenerating to the pair $(X,L)$, we obtain
\begin{theorem}\label{thminduction}Suppose a general pair $(Y,A)$ in $\cW^r_{g,d}$ satisfies one of the two conditions:
\begin{enumerate}
\item $K_{p,1}(Y,A)=0$;
\item $K_{p-1,2}(Y,A)=0$ and the vector bundle $\wedge^pM_A\otimes A$ satisfies (\ref{eqnRR}) for some $u,v\in Y$.
\end{enumerate}
Then the property ${\bf GV}(p)^r_{g+1,d+1}$ holds.
\end{theorem}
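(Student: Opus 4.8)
The plan is to realize a general pair in $\cW^r_{g+1,d+1}$ as a flat limit of the nodal pairs $(X,L)$ of Section 1 and to carry a Koszul vanishing from the special fibre to the general one by upper semicontinuity. Starting from the given general $(Y,A)\in\cW^r_{g,d}$ satisfying (1) or (2), attach $Z=\bP^1$ at two general points $u,v\in Y$ and extend $A$ to $L$ with $L|_Z=\cO_{\bP^1}(1)$, as in Section 1; this is a nodal, hence Gorenstein, curve of arithmetic genus $g+1$ carrying a $g^r_{d+1}$. The first thing to pin down is that $(X,L)$ lies in the closure of $\cW^r_{g+1,d+1}$: one must show it regenerates to a pair $(X_t,L_t)$ with $X_t$ a general smooth curve of genus $g+1$ and $L_t$ a general $g^r_{d+1}$. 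Since the dual graph of $X$ has a single loop and $\rho(g+1,r,d+1)=\rho(g,r,d)+1$, this is a standard limit-linear-series statement on the semistable curve $X$, and I would cite it in that form.

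Granting a smoothing $(\cX,\cL)\ra\Delta$, it suffices to prove that, for this one pair $(X,L)$, at least one of $K_{p,1}(X,L)$ and $K_{p-1,2}(X,L)$ vanishes---more precisely, that one of these groups, or of their Green duals on the smooth fibres, vanishes. The point is that the spaces of sections feeding the relevant Koszul complexes stay of constant dimension along the family: $H^0(L)\cong H^0(Y,A)$ and $H^0(\omega_X\otimes L^{-1})$, $H^0(\omega_X)$, $H^0(\omega_X\otimes L)$ are controlled by the isomorphisms (\ref{eqnsectionL})--(\ref{eqnsectionO}) (the Brill-Noether deficiency $h^1=g-d+r$ being the same on $Y$ and on $X_t$), while $h^0(L^2)$ and $h^0(L^3)$ are constant by a Riemann-Roch count once $L$ and its small powers are nonspecial. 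Hence the relevant complexes are complexes of vector bundles over $\Delta$, their fibrewise cohomology dimensions are upper semicontinuous, and a vanishing at $t=0$ forces the same vanishing for general $t$.

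It remains to exhibit the vanishing on $(X,L)$. Under hypothesis (1) this is immediate from Proposition \ref{propKp1}, which turns $K_{p,1}(Y,A)=0$ into $K_{p,1}(X,L)=0$; semicontinuity of $K_{p,1}$ then gives $K_{p,1}(X_t,L_t)=0$. Under hypothesis (2) I pass to the $\omega$-twisted side: the duality of \cite{AN} on the smooth curve $Y$ says $K_{p-1,2}(Y,A)=0$ is equivalent to $K_{r-p,0}(Y,A;K_Y)=0$, and condition (\ref{eqnRR}) lets me apply the Corollary to Proposition \ref{propsurff} to conclude $K_{r-p,0}(X,L;\omega_X)\cong K_{r-p,0}(Y,A;K_Y)=0$. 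Semicontinuity for the complex computing $K_{r-p,0}(\,\cdot\,;\omega)$ then yields $K_{r-p,0}(X_t,L_t;K_{X_t})=0$ for general $t$, and one application of duality on the smooth curve $X_t$ converts this back into $K_{p-1,2}(X_t,L_t)=0$. In either case $\min\{k_{p,1}(X_t,L_t),\,k_{p-1,2}(X_t,L_t)\}=0$ for general $(X_t,L_t)\in\cW^r_{g+1,d+1}$, which is exactly ${\bf GV}(p)^r_{g+1,d+1}$.

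The step I expect to be the real obstacle is the degeneration input behind the first paragraph: one has to know that the nodal pair $(X,L)$ genuinely smooths to a general point of $\cW^r_{g+1,d+1}$ (so that the semicontinuity conclusion sits at the generic pair and not at some proper subvariety of the Brill-Noether locus), and that the section sheaves above are honestly locally free over $\Delta$. Both hold in the nonspecial range where $L_t$ and its low powers have vanishing $H^1$---in particular in the situations of Theorems \ref{thmmain} and \ref{thmapp}---but would require an explicit argument in full generality. Everything else is the diagram chasing already carried out in Section 1.
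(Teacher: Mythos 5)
Your proposal is correct and follows essentially the same route the paper intends: the paper's entire ``proof'' is the phrase ``By degenerating to the pair $(X,L)$'', relying on Proposition \ref{propKp1} for hypothesis (1), on duality plus the corollary of Proposition \ref{propsurff} for hypothesis (2), and on semicontinuity along a smoothing of the nodal pair, exactly as you spell out. You have merely made explicit the degeneration and flatness points (smoothing to a general pair of $\cW^r_{g+1,d+1}$, constancy of the relevant $h^0$'s) that the paper leaves implicit.
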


\section{The case $p=1$}
In the case $p=1$, Equation (\ref{eqnRR}) has a very geometric interpretation.

\begin{lemma}\label{lemquadric}For a pair $Y\stackrel{\phi_{|A|}}\hookrightarrow\bP^r$ in $\cW^r_{g,d}$, the vector bundle $M_A\otimes A$ satisfies equation (\ref{eqnRR}) for some $u,v\in Y$ if and only if there exists a quadric hypersurface $Q\subset\bP^r$ containing $Y$ but {\bf not} containing its first secant variety $\Sigma_1(Y)$.   
\end{lemma}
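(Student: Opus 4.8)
The plan is to translate equation (\ref{eqnRR}) — for $p=1$, the statement that $h^0(M_A\otimes A(-u-v)) = h^0(M_A\otimes A) - 2r$ — into the language of quadrics through $Y$ by unwinding the two exact sequences defining $M_A$ and its wedge powers. First I would record the relevant identifications. Twisting $0\to M_A\to H^0(A)\otimes\cO_Y\to A\to 0$ by $A$ and taking cohomology gives $H^0(M_A\otimes A)$ as the kernel of the multiplication map $H^0(A)\otimes H^0(A)\to H^0(A^2)$; intersecting with the antisymmetric part recovers $K_{1,1}$, but here we want the full kernel, which is naturally $I_2(Y,A)\oplus(\text{something})$ — more precisely, since the symmetric part of the kernel is exactly $I_2 = H^0(\bP^r,\cI_{Y}(2))$, the space of quadrics containing $Y$. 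So $h^0(M_A\otimes A) = h^0(\cI_Y(2)) + \binom{r+1}{2}$ after accounting for the Koszul/wedge contribution; the precise bookkeeping is routine once one uses $\wedge^2 H^0(A)\oplus \Sym^2 H^0(A) = H^0(A)^{\otimes 2}$ and that $d_{1,1}$ kills $\wedge^2$.

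The second, and geometrically essential, step is to interpret $h^0(M_A\otimes A(-u-v))$. Restricting the evaluation sequence, $M_A(-u-v)$ is the kernel of $H^0(A)\otimes\cO_Y(-u-v)\to A(-u-v)$, but the cleaner route is: a section of $M_A\otimes A$ is a quadratic relation (a tensor $\sum a_i\otimes b_i$ with $\sum a_i b_i = 0$ in $H^0(A^2)$, modulo the trivial ones); vanishing at $u$ and $v$ after tensoring down picks out those relations whose "value" at $u$ and at $v$ — i.e. the induced element of $A(-u-v)\otimes H^0(A)$ — vanishes. Geometrically, a quadric $Q$ through $Y$ contains the secant line $\overline{\phi(u)\phi(v)}$ precisely when it vanishes at a third point of that line, and the condition that the corresponding relation in $M_A\otimes A$ drop rank by $2$ at $\{u,v\}$ is exactly the condition that $Q \not\supseteq \overline{\phi(u)\phi(v)}$. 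Running this over a general secant line and comparing dimensions: the failure of (\ref{eqnRR}) — i.e. $h^0(M_A\otimes A(-u-v)) > h^0(M_A\otimes A) - 2r$ — says the evaluation map at $\{u,v\}$ on $M_A\otimes A$ is not surjective, which forces every quadric through $Y$ to contain every secant line $\overline{\phi(u)\phi(v)}$ for that $(u,v)$; since $\Sigma_1(Y)$ is the closure of the union of such lines and $I_2(Y)$ is finite-dimensional, this globalizes to $I_2(Y) = I_2(\Sigma_1(Y))$, i.e. no quadric contains $Y$ but not $\Sigma_1(Y)$. Conversely, if such a $Q$ exists, choosing $u,v$ general so that $Q$ misses $\overline{\phi(u)\phi(v)}$ makes the evaluation map surjective, giving (\ref{eqnRR}).

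The main obstacle I anticipate is the exactness/surjectivity bookkeeping in the middle step: one must show that the cokernel of $H^0(M_A\otimes A)\to H^0(M_A\otimes A|_{u+v})$ (a space of dimension $2r$ when the map is surjective) is controlled precisely by the quadrics failing to contain the one secant line, with no spurious contributions from $H^1(M_A\otimes A(-u-v))$ or from the difference between "relations" and "quadrics" (the antisymmetric/Koszul part). Handling the $H^1$ term will likely use that $u,v$ are general together with the normal generation-type hypotheses implicit in $\cW^r_{g,d}$, and separating the symmetric part cleanly is where one must be careful that the count $2\binom{r}{1} = 2r$ in (\ref{eqnRR}) matches the two point-evaluations on the rank-$r$ bundle $M_A\otimes A$ restricted to a length-two scheme. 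Once the dimension count is pinned down, the equivalence with the existence of the quadric $Q\supseteq Y$, $Q\not\supseteq\Sigma_1(Y)$ is immediate.
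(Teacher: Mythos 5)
Your reduction of the statement to linear algebra is set up correctly and is in fact the same reduction the paper uses: $h^0(M_A\otimes A)=\dim\ker\big(\mu\colon H^0(A)\otimes H^0(A)\to H^0(A^2)\big)$, the antisymmetric part $\wedge^2H^0(A)$ lies in $\ker\mu$, and the symmetric quotient of $\ker\mu$ is $H^0(\bP^r,\cI_Y(2))$. But the heart of the lemma is exactly the step you leave as ``routine bookkeeping'' and then flag as the ``main obstacle'': the claim that surjectivity of the evaluation $H^0(M_A\otimes A)\to (M_A\otimes A)|_u\oplus(M_A\otimes A)|_v$ (which is what (\ref{eqnRR}) says, since the target has dimension $2r$) is equivalent to the existence of a quadric through $Y$ not containing the line $\overline{uv}$. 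You assert this equivalence (the phrase ``the corresponding relation in $M_A\otimes A$ drops rank by $2$ at $\{u,v\}$'' is not a meaningful condition on a single relation and does not substitute for an argument), but you never prove it, and it is precisely the content of the lemma. The paper's proof \emph{is} that bookkeeping: one identifies $h^0(M_A\otimes A(-u-v))$ with $\dim\ker\mu'$, where $\mu'$ is the restriction of $\mu$ to $H_{u,v}:=H^0(A)\otimes H^0(A(-u-v))$; shows that the image $\overline H_{u,v}$ of $H_{u,v}$ in $S^2H^0(A)$ is the space of quadratic forms vanishing on $\overline{uv}$, of dimension $\binom{r+2}{2}-3$; proves by a dimension count that $H_{u,v}\cap\wedge^2H^0(A)=\wedge^2H^0(A(-u-v))$, so the Koszul part contributes exactly $\binom{r-1}{2}$ to $\ker\mu'$; and then concludes that if some quadric through $Y$ misses the line, $\dim\ker\mu'\le (m-1)+\binom{r-1}{2}=\dim\ker\mu-2r$, with the converse obtained by reversing the count. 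Without this (or an equivalent computation showing the Koszul part already accounts for $2r-1$ of the required rank and that the last unit of rank is exactly the linear condition ``$Q\supseteq\overline{uv}$'' being nontrivial on $H^0(\cI_Y(2))$), your outline stops short of a proof.

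Your proposed way around the obstacle is also misdirected. Since $H^0$ is left exact and $M_A\otimes A(-u-v)$ is a kernel bundle twist, $h^0(M_A\otimes A(-u-v))$ is computed exactly as $\dim\ker\mu'$, with no $H^1$ correction to control; the cokernel of the evaluation map is not a ``spurious'' term to be killed by choosing $u,v$ general or by ``normal generation-type hypotheses implicit in $\cW^r_{g,d}$'' (the space $\cW^r_{g,d}$ carries no such hypotheses, and the lemma is stated for an arbitrary embedded pair). That cokernel can genuinely be nonzero for every choice of $u,v$ --- that is the case the lemma's ``if and only if'' is designed to detect --- so its vanishing cannot follow from generality; it must be tied, as above, to the existence of a quadric through $Y$ off the secant line. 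The logical skeleton of your two directions (contrapositive plus globalization from all secant lines to $\Sigma_1(Y)$) is fine once the pointwise equivalence is established, but as written the proposal asserts rather than proves that equivalence.
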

\begin{proof} \begin{enumerate}
\item[($\Longleftarrow$)] The "$\ge$" direction of (\ref{eqnRR}) is automatically satisfied. For the other direction,
consider the following diagram with exact rows 
$$\xymatrix{0\ar[r]&H^0(M_{A}\otimes A(-u-v))\ar[r]\ar@{^{(}->}[d]&H^0(A)\otimes H^0(A(-u-v))\ar[r]^-{\mu'}\ar@{^{(}->}[d]&H^0(A^2(-u-v))\ar@{^{(}->}[d]\\
0\ar[r]&H^0(M_{A}\otimes A)\ar[r]&H^0(A)\otimes H^0(A)\ar[r]^-{\mu}&H^0(A^2).}$$

We need to show
$$\dim_{\bC}Ker(\mu')\le\dim_{\bC}Ker(\mu)-2r.$$

Denote $H_{u,v}:=H^0(A)\otimes H^0(A(-u-v))$ and $\overline{H}_{u,v}$ be its image in 
$$\frac{H^0(A)\otimes H^0(A)}{\wedge^2H^0(A)}\cong S^2H^0(A).$$
Note that
\[
\overline{H}_{u,v}\cong \frac{H_{u,v}}{H_{u,v}\cap\wedge^2H^0(A)},
\]
is the space of quadrics which contain the secant line $\overline{uv}$. Thus
\[\dim_{\bC}\overline{H}_{u,v}={r+2\choose2}-3.
\]

We claim that 
\[H_{u,v}\cap\wedge^2H^0(A)=\wedge^2H^0(A(-u-v)).
\]
This is because
\begin{eqnarray}
&&\dim_\bC H_{u,v}\cap\wedge^2H^0(A)\nonumber\\
&=&\dim_\bC H_{u,v}-\dim_\bC \overline{H}_{u,v}\nonumber\\
&=&(r+1)(r-1)-[{r+2\choose2}-3]={r-1\choose2}\nonumber\\
&=&\dim_\bC \wedge^2H^0(A(-u-v)).\nonumber
\end{eqnarray}
The claim is proved.

By hypothesis, $\overline{Ker(\mu)}\nsubseteq \overline{H}_{u,v}$ for some $u,v\in Y$ (since $Q\notin \overline{H}_{u,v}$), then it follows that

$$\dim_{\bC}(\overline{Ker(\mu')})=\dim_{\bC}(\overline{Ker(\mu)\cap H_{u,v}})\le\dim_{\bC}(\overline{Ker(\mu)}\cap\overline{H}_{u,v})\le\dim_{\bC}(\overline{Ker(\mu)})-1=:m-1.$$
Thus
\begin{eqnarray}
\dim_{\bC}(Ker(\mu'))&\le& m-1+\dim_{\bC}(\wedge^2 H^0(A)\cap H_{u,v})\nonumber\\&=&m-1+\dim_{\bC}(\wedge^2H^0(A(-u-v)))\nonumber\\&=&m-1+{r-1\choose2}=m+{r+1\choose2}-2r\nonumber\\&=&\dim_{\bC}(Ker(\mu))-2r.\nonumber
\end{eqnarray}
\item[($\Longrightarrow$)] Reverse the above argument we get the \textquotedblleft only if " part.
\end{enumerate}
\end{proof}
\begin{lemma}\label{lemsecant}Suppose $Y\hookrightarrow\bP^r$ is a nondegenerate curve in $\bP^r$, then there does {\bf not} exist any quadric hypersurface $Q$ containing $\Sigma_1(Y)$. 
\end{lemma}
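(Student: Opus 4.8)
The plan is to show that a quadric containing the first secant variety $\Sigma_1(Y)$ would force the section ring multiplication to degenerate in a way incompatible with nondegeneracy. Recall that $\Sigma_1(Y)$ is swept out by the secant lines $\overline{pq}$ for $p,q\in Y$ (including tangent lines in the limit), so a quadric $Q$ containing $\Sigma_1(Y)$ must vanish on every secant line. The key observation is to reinterpret this in terms of the multiplication map $\mu:\mathrm{Sym}^2H^0(Y,A)\to H^0(Y,A^2)$: a quadric $q\in\ker(\mu)$ corresponds to an element of $\mathrm{Sym}^2 H^0(A)$, hence to a symmetric bilinear form $b_q$ on $H^0(A)^\vee$, and the quadric contains the secant line $\overline{pq}$ precisely when $b_q$ vanishes on the two-dimensional subspace spanned by the evaluation functionals $\mathrm{ev}_p,\mathrm{ev}_q\in H^0(A)^\vee$ (and the analogous statement with a first-order condition for tangent lines).

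The main step is then the following linear-algebra/geometry argument. Suppose $Q$ corresponds to $0\neq q\in\ker(\mu)\subset\mathrm{Sym}^2H^0(A)$ and $Q\supseteq\Sigma_1(Y)$. Viewing $q$ as a symmetric bilinear form $b_q$ on the image of the Gauss-type map $Y\to\bP(H^0(A)^\vee)$, the hypothesis says $b_q(\mathrm{ev}_p,\mathrm{ev}_{p'})=0$ for \emph{all} pairs $p,p'\in Y$. Since $Y$ is nondegenerate, the evaluation functionals $\{\mathrm{ev}_p : p\in Y\}$ span all of $H^0(A)^\vee$ (their common zero locus in $H^0(A)$ is $\bigcap_p H^0(A-p)=0$ as $A$ is globally generated and $h^0(A)=r+1$). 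A symmetric bilinear form vanishing on all pairs from a spanning set is identically zero, so $b_q=0$, hence $q=0$, a contradiction. Equivalently and more concretely: expanding $b_q(\mathrm{ev}_p,\mathrm{ev}_p)=0$ for all $p$ says the quadratic form $q$ restricted to $Y$ is zero — that is $q\in\ker(\mu)$, which we already knew — but the vanishing on \emph{distinct} pairs is the extra input that polarizes this to the vanishing of the full bilinear form.

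The step I expect to be the main obstacle is making the ``tangent line'' degenerations rigorous, i.e. checking that vanishing on honest secant lines $\overline{pp'}$ for $p\neq p'$ together with a closure/continuity argument already forces vanishing on the whole of $\Sigma_1(Y)$ including embedded tangent lines, and conversely that containing $\Sigma_1(Y)$ really does give the vanishing $b_q(\mathrm{ev}_p,\mathrm{ev}_{p'})=0$ for all pairs rather than just on an open set. One clean way around this: it suffices to use vanishing on a single general secant line to run the contradiction, because the span of $\{\mathrm{ev}_p,\mathrm{ev}_{p'}\}$ as $(p,p')$ ranges over $Y\times Y$ already exhausts $H^0(A)^\vee$ by nondegeneracy; there is no need for the tangent lines at all. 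So I would phrase the proof as: pick $q\in\mathrm{Sym}^2H^0(A)$ defining $Q\supseteq\Sigma_1(Y)$; for every pair of distinct points the associated bilinear form kills the corresponding 2-plane of functionals; these 2-planes sweep out a spanning set of $H^0(A)^\vee$; conclude $b_q=0$, so $Q$ is the zero quadric, proving no such hypersurface exists.
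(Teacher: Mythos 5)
Your argument is correct, but it takes a different route from the paper. You work entirely with the symmetric bilinear form: writing $Q$ as $b_q\in\mathrm{Sym}^2H^0(A)$, containment of the honest secant line $\overline{pp'}$ forces (by polarization, char $0$) $b_q(\mathrm{ev}_p,\mathrm{ev}_{p'})=0$ along with the diagonal vanishing from $Y\subset Q$, and since the evaluation vectors span $H^0(A)^\vee$ by nondegeneracy, $b_q=0$, so no such hypersurface exists; tangent lines and limits are indeed never needed. The paper instead argues synthetically: fixing $u\in Y$, the cone $\cJ(u,Y)$ of secant lines through $u$ lies in $Q$, which forces $Q$ to be singular at $u$ (otherwise those lines would lie in $T_uQ$, contradicting nondegeneracy), hence $Q$ is singular along $Y$; but the singular locus of a quadric is a linear subspace and cannot contain a nondegenerate curve. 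The two proofs exploit the same two ingredients (secant-line containment as a bilinear condition, nondegeneracy as spanning), but yours reaches the stronger conclusion $q=0$ directly by pure linear algebra, whereas the paper's passes through the projective geometry of quadrics (tangent hyperplanes and the linearity of the singular locus); your version is arguably more elementary and self-contained. One small slip to fix in the write-up: the remark that ``it suffices to use vanishing on a single general secant line'' is not literally true — you need the bilinear vanishing for enough pairs $(p,p')$ to hit a basis of $H^0(A)^\vee$, which is exactly what your final summary (all pairs of distinct points, whose $2$-planes sweep out a spanning set) uses; also the spanning of the $\mathrm{ev}_p$ follows simply from nondegeneracy of the embedding (a nonzero linear form vanishing on all of $Y$ would give a hyperplane containing $Y$), with no need to invoke global generation.
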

\begin{proof}Suppose $Y\subset\Sigma_1(Y)\subset Q$ for some quadric $Q$. Fix a point $u\in Y$, since $Q$ contains $\Sigma_1(Y)$, $Q$ must contain the variety $\cJ(u,Y)$ of lines joining $u$ and $Y$. Thus the quadric $Q$ is singular at $u$. (If $Q$ is smooth at $u$, a secant line $\overline{uw}\subset Q$ if and only if $\overline{uw}\subset T_uQ$. Choose $w\in Y\setminus T_uQ$, we have $\overline{uw}\subset \cJ(u,Y)\subset  \Sigma_1(Y)$ but $\overline{uw}\nsubseteq Q$.) Since $u$ is chosen arbitrarily, we conclude that $Q$ is singular along $Y$. This is absurd since the singular locus of a quadric is a linear subspace in $\bP^r$ which can not contain the nondegenerate curve $Y$.
\end{proof}

\begin{proof} of {\bf Theorem \ref{thmmain}}. Follows immediately from Theorem \ref{thminduction} and Lemmas \ref{lemquadric}, \ref{lemsecant}.
\end{proof}

\section{Applications to the Maximal Rank Conjecture}

As an application of Theorem \ref{thmmain}, we obtain a proof of Theorem \ref{thmapp}. 

We say a triple $(g,r,d)$, or equivalently $(g,r,h^1=g-d+r)$ is a base case for the MRC if the Brill-Nother number $\rho:=g-(r+1)h^1=0$. 
\begin{theorem}I\label{thmbase}f the MRC holds for all $\rho=0$ cases, then it holds for arbitrary $\rho\ge0$ case.
\end{theorem}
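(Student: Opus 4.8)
The plan is to deduce Theorem~\ref{thmbase} from Theorem~\ref{thmmain} by an induction on the Brill--Noether number, the key point being that the operation $(g,d)\mapsto(g+1,d+1)$ appearing in Theorem~\ref{thmmain} keeps $r$ fixed and raises $\rho$ by exactly one, so that the single ``layer'' of base cases $\rho=0$ suffices to reach everything.

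\emph{Arithmetic setup.} For a triple $(g,r,d)$ put $h^1=g-d+r$ and $\rho=\rho(g,r,d)=g-(r+1)h^1$. Replacing $(g,d)$ by $(g+1,d+1)$ leaves $r$ and $h^1$ unchanged and replaces $\rho$ by $\rho+1$. Hence, given $(g,r,d)$ with $\rho\ge0$, the triple $(g-\rho,\,r,\,d-\rho)$ has the same $h^1$ and Brill--Noether number $0$, and $(g,r,d)$ is reached from it by $\rho$ successive applications of $(g,d)\mapsto(g+1,d+1)$, every intermediate triple $(g-\rho+k,\,r,\,d-\rho+k)$ with $0\le k\le\rho$ again having nonnegative Brill--Noether number. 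I would also record the routine facts that make ${\bf GV}(1)^r_{g',d'}$ meaningful at each stage: a complete $g^r_{d'}$ has $h^1=h^1(L)\ge0$ automatically; $\cW^r_{g',d'}\ne\emptyset$ precisely because $\rho(g',r,d')\ge0$; and the generic $g^r_{d'}$ on a general curve of genus $g'$ is base point free, since a base point would produce a $g^r_{d'-1}$, while $\rho(g',r,d'-1)=\rho(g',r,d')-(r+1)<0$. Finally, the cases $r\le1$ are trivial (for $r=1$ the multiplication map is injective on $Sym^2H^0(L)$, a nonzero binary quadratic form in two independent sections having no zero, and $r=0$ is vacuous), so one may assume $r\ge2$, where $p=1$ lies in the range $1\le p\le r-1$ for which ${\bf GV}(p)$ is defined; for general $(X,L)\in\cW^r_{g,d}$ the MRC (for quadrics) is precisely the assertion ${\bf GV}(1)^r_{g,d}$, i.e.\ $\min\{k_{1,1},k_{0,2}\}=0$.

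\emph{Induction.} Fix $r\ge2$. I claim ${\bf GV}(1)^r_{g,d}$ holds for every $(g,d)$ with $\rho(g,r,d)=n$, by induction on $n\ge0$. For $n=0$ this is exactly the hypothesis of the theorem. For $n\ge1$, the triple $(g-1,\,r,\,d-1)$ has Brill--Noether number $n-1\ge0$, so ${\bf GV}(1)^r_{g-1,d-1}$ holds by the inductive hypothesis; Theorem~\ref{thmmain}, applied with $(g,d)$ there replaced by $(g-1,d-1)$, then yields ${\bf GV}(1)^r_{g,d}$. This completes the induction, and hence the proof, every case with $\rho\ge0$ being covered.

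Once Theorem~\ref{thmmain} is in hand there is no genuine obstacle here; the argument is a formal induction and all the geometry has already been done. The only points deserving attention are the bookkeeping ones collected above---checking that each triple met along the induction still lies in the range where Theorem~\ref{thmmain} (hence Theorem~\ref{thminduction} and Lemmas~\ref{lemquadric} and \ref{lemsecant}) applies, and, above all, the elementary observation that because $\rho$ moves by a single unit under $(g,d)\mapsto(g+1,d+1)$ the base stratum $\rho=0$ really does propagate to every $\rho\ge0$.
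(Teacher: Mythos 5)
Your proposal is correct and follows essentially the same route as the paper: an induction in which $r$ and $h^1$ are held fixed while $(g,d)$ goes up by $(1,1)$, so that $\rho$ increases by one at each application of Theorem~\ref{thmmain}, starting from the $\rho=0$ base cases. The extra bookkeeping you record (nonemptiness of $\cW^r_{g,d}$, base point freeness, trivial small $r$) is harmless but not part of the paper's brief argument.
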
 
\begin{proof}Apply Theorem \ref{thmmain} and induction. Start with any base case, for which we assume property ${\bf GV}(1)^r_{g,d}$ holds. In each step of the induction, $r$ and $h^1$ is fixed and $g$ (equivalently $\rho$ or $d$) goes up by $1$.
\end{proof}

The MRC for the base cases are known to be true when $h^1\le2$.  According to the value of $h^1$, we have the following.
\begin{enumerate}
\item $h^1=0$. We have $g=0$ and $d=r\ge1$, i.e. $(Y,A)=(\bP^1,\cO_{\bP^1}(d))$. The rational normal curves are projectively normal.
\item $h^1=1$. In this case, $g=r+1, d=2r$, i.e $(Y,A)=(Y,K_Y)$. By Nother's theorem, canonical curves are projectively normal ($r\ge2$).
\item $h^1=2$. Then $g=2r+2, d=3r$. Such pairs $(Y,A)$ are projectively normal for $r\ge4$ is the main result of \cite{W1} (The MRC is easy to check when $r=2$ or $3$).
\end{enumerate}

Farkas \cite{F} also proved that ${\bf GV}(1)^{2s}_{s(2s+1),2s(s+1)}$ holds for any $s\ge1$. This covers the base cases when $h^1=s$ and $r=2s$.
\begin{proof} of {\bf Theorem \ref{thmapp}}. Follows immediately from the base cases with $h^1\le2$ and Theorem \ref{thmbase}. \end{proof}

\end{document}